\title{The expected degree of minimal spanning forests}
\author{Andreas Thom}
\address{A.T., Mathematisches Institut, U Leipzig,
PF 100920, 04009 Leipzig, Germany}
\email{andreas.thom@math.uni-leipzig.de}
\date{\today}
\newtheorem{thm}{Theorem}
\newtheorem{cor}[thm]{Corollary}
\newtheorem{lem}[thm]{Lemma}
\theoremstyle{definition}
\newtheorem{rem}[thm]{Remark}
\newcommand{\N}{{\mathbb N}}
\newcommand{\Z}{{\mathbb Z}}
\newcommand{\R}{{\mathbb R}}
\begin{document}

\onehalfspace

\maketitle
\begin{abstract} 
We give a lower bound on the expected degree of the free minimal spanning forest of a vertex transitive graph in terms of
its spectral radius. This result answers a question of Lyons-Peres-Schramm and simplifies the Gaboriau-Lyons proof of the measurable-group-theoretic solution to von Neumann's problem.

In the second part we study a relative version of the free minimal spanning forest. As a consequence of this study we can show that non-torsion unitarizable groups have fixed price one.
\end{abstract}

\section{Free minimal spanning forests}
\label{FMSF}
Let $G=(V,E^\sharp)$ be a connected graph. We view $E^\sharp$ as a set of oriented edges with the requirement, that for each oriented edge in $E^\sharp$, the opposite edge is also contained in $E^\sharp$. For each oriented edge $e$, we denote by $\hat e$ the opposite edge, by $\underline{e}$ the tail and by $\overline{e}$ the head of $e$. We require $e \neq \hat e$ for all $e \in E^\sharp$. 
Thus, an {\it edge} is naturally identified with a unordered pair of oriented edges $\{e,\hat e\}$. The set of edges is denoted by $E$. We say that $G$ is vertex-transitive if for any two vertices $x,y \in V$, there exists an automorphism $\varphi$ of $G$, such that $\varphi(x)=y$. From now on, we will only consider vertex-transitive graphs. Let $d$ be the degree of a vertex in $G$.

A path of length $n$ between vertices $x,y \in V$ is a sequence of oriented edges $(e_1,\dots,e_n)$ such that $\overline{e}_i = \underline{e}_{i+1}$ for all $1 \leq i \leq n$, $\underline{e}_1=x$ and $\overline{e}_n=y$. A path is called a cycle rooted at $x \in V$ if  $x=y$. A cycle is called simple if no vertex is visited twice  and it is not of the form $(e,\hat e)$.
We denote the number of cycles rooted at $x$ of length $n$ by $c(n)$. The spectral radius of $G$ is defined to be
$$\lambda := \frac1d \cdot \limsup_{n \to \infty} c(n)^{1/n}$$
and it equals the operator norm of the random walk operator on $\ell^2(V)$, see \cite[Lemma 10.1]{woess}. For $x,y \in V$, we denote by $c(n,x,y)$ the number of paths of length $n$ from $x$ to $y$. A basic consequence of the description of the operator norm of the random walk operator is the estimate
\begin{equation} \label{est1}
c(n,x,y) \leq (\lambda d)^n.
\end{equation}

A subgraph of $G=(V,E^\sharp)$ is a subset $V_1 \subset V$ and a subset $E_1^\sharp \subset E^\sharp$, such that $G_1=(V_1,E_1^\sharp)$ is itself a graph, i.e. $E_1^\sharp$ is closed under taking opposite oriented edges and oriented edges in $E_1^\sharp$ connect only points in $V_1$. A subgraph $G_1=(V_1,E_1^\sharp)$ is called spanning if $V_1=V$. A subgraph of $G$ is called a forest, if it does not contain any simple cycles.

Let us start by recalling the definition of the minimal spanning forest of $G$. First of all, a spanning subgraph of $G$ is naturally identified with an element of the space $\{0,1\}^{E}$. 
The minimal spanning forest is a probability measure $\rm FMSF$ on $\{0,1\}^{E}$ whose support consists of spanning forests. The measure $\rm FMSF$ arises as the push-forward of the product measure $\mu^{\otimes E}$ along a map
$$\Phi \colon [0,1]^{E} \to \{0,1\}^{E},$$
where $\mu$ denote the Lebesgue measure on $[0,1]$.
To define $\Phi$, we view an element $x \in [0,1]^{E}$ as a $[0,1]$-labelling of the edges of $G$. Whenever we see a simple cycle, then cut all edges in this cycle, which carry a maximal label. The resulting graph is denoted by $\Phi(x)$ and is clearly a spanning forest. It can be checked directly, that $\Phi$ is a Borel map. The study of minimal spanning forests on lattices in $\R^d$ and specific graphs has a long history. The systematic study of ${\rm FMSF}$ on unimodular vertex-transitive graphs was initiated by Lyons-Peres-Schramm \cite{MR2271476}, and we refer to this paper also for further references. 
The virtue of the construction from above is that the resulting probability measure on $\{0,1\}^E$ is invariant under all automorphisms of the graph $G$. In particular, since we assume $G$ to be vertex-transitive, what happens at a single vertex happens everywhere. 
Let $x \in V$ be some fixed vertex and let $\delta_{\rm FMSF}(G)$ be the expected degree of the vertex $x$ with respect to $\rm FMSF$. The number $\delta_{\rm FMSF}(G)$ (which is clearly independent of $x$) is called the expected degree of the free minimal spanning forest. 

\begin{thm} \label{main} Let $G$ be a vertex transitive graph with degree $d$ and spectral radius $\lambda$. Then, the following inequalities hold:
$$  \frac1{4 \lambda} - \frac14 \leq \delta_{\rm FMSF}(G) \leq d.$$
\end{thm}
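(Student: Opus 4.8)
The upper bound is immediate and requires no hypotheses: in \emph{any} spanning subgraph of $G$ the degree of $x$ is at most its degree $d$ in $G$, so $\delta_{\rm FMSF}(G) \le d$. The whole content is the lower bound, which I would obtain by reducing the event $\{e \in {\rm FMSF}\}$ to a Bernoulli percolation statement and then applying the path estimate~\eqref{est1}. The starting point is the description of $\Phi$: for $\omega \in [0,1]^E$ with pairwise distinct labels (a full-measure set), an edge $e$ is cut by $\Phi$ precisely when it is the unique maximal-label edge of some simple cycle, and such a cycle exists exactly when $\underline e$ and $\overline e$ lie in the same connected component of the subgraph $\omega_{<\omega_e} := \{\, e' \in E : \omega_{e'} < \omega_e \,\}$. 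Thus
$$ e \in \Phi(\omega) \iff \underline e \not\leftrightarrow \overline e \ \text{ in } \ \omega_{<\omega_e}. $$
Now fix an edge $e$ incident to $x$ and condition on $\omega_e = t$. The remaining labels are i.i.d.\ uniform, so each $e' \ne e$ lies in $\omega_{<t}$ independently with probability $t$; that is, $\omega_{<t}$ is exactly a Bernoulli$(t)$ bond percolation on $G$ with the edge $e$ deleted. Writing $\Prob_t$ for this percolation, we get
$$ \Prob\bigl[e \in {\rm FMSF}\bigr] \;=\; \int_0^1 \Prob_t\bigl[\underline e \not\leftrightarrow \overline e\bigr]\, dt \;=\; 1 - \int_0^1 \Prob_t\bigl[\underline e \leftrightarrow \overline e\bigr]\, dt. $$

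To control the connection probability I would use a union bound over self-avoiding paths together with~\eqref{est1}: since the number of paths of length $n$ from $\underline e$ to $\overline e$ is at most $(\lambda d)^n$,
$$ \Prob_t\bigl[\underline e \leftrightarrow \overline e\bigr] \;\le\; \sum_{n \ge 1} c(n,\underline e,\overline e)\, t^n \;\le\; \sum_{n \ge 1} (\lambda d\, t)^n \;\le\; 2\lambda d\, t \qquad \text{whenever } \lambda d\, t \le \tfrac12 , $$
while trivially $\Prob_t[\,\cdot\,]\le 1$. Since $c(2n) \ge d^n$ (concatenate $n$ back-and-forth steps at $x$), one has $\lambda d \ge 1$, so the threshold $t_0 := \tfrac1{2\lambda d}$ lies in $(0,\tfrac12]$. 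Splitting the integral at $t_0$,
$$ \int_0^1 \Prob_t\bigl[\underline e \leftrightarrow \overline e\bigr]\, dt \;\le\; \int_0^{t_0} 2\lambda d\, t \, dt \;+\; \int_{t_0}^{1} 1 \, dt \;=\; \frac{1}{4\lambda d} + 1 - \frac{1}{2\lambda d} \;=\; 1 - \frac{1}{4\lambda d}, $$
hence $\Prob[e \in {\rm FMSF}] \ge \tfrac{1}{4\lambda d}$. Finally $\delta_{\rm FMSF}(G) = \sum_{e \ni x} \Prob[e \in {\rm FMSF}]$ is a sum of $d$ such terms, and the estimate above applies to each of them individually (no edge-transitivity is needed), so $\delta_{\rm FMSF}(G) \ge \tfrac1{4\lambda} \ge \tfrac1{4\lambda} - \tfrac14$, which is in fact slightly stronger than claimed; one can of course also keep the sharper $\sum_{n\ge1}(\lambda d t)^n = \tfrac{\lambda d t}{1-\lambda d t}$ instead of $2\lambda d t$ on $[0,t_0]$ for an even better constant.

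The step I expect to be the main obstacle is the careful verification of the equivalence ``$e \in \Phi(\omega) \iff \underline e \not\leftrightarrow \overline e$ in $\omega_{<\omega_e}$'': from the simultaneous-cutting definition of $\Phi$ one must check that an edge is removed exactly when it is the strict maximum of some simple cycle, and then translate that into the percolation statement. Once this is in hand the rest is a routine union bound and a one-variable integral, the only mild subtlety being the truncation at $t_0 = \tfrac1{2\lambda d}$: for labels close to $1$ the cluster of $\underline e$ in $\omega_{<t}$ can be the whole graph, so the path estimate is vacuous there and one simply bounds the integrand by $1$.
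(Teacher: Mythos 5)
Your argument is, at its core, the same as the paper's: condition on the label $t$ of a fixed edge at $x$, bound the probability that this edge is the maximal edge of some simple cycle by a union bound over paths between its endpoints using the path estimate \eqref{est1}, and truncate the label integral at $\tfrac{1}{2\lambda d}$. The percolation reformulation ($e$ survives if and only if its endpoints are not joined in the subgraph of strictly smaller labels) is a repackaging of the same computation, and the constants come out essentially the same: you get $\tfrac1{4\lambda d}$ per edge, the paper gets $\tfrac{1-\ln 2}{\lambda d}$ and rounds it down to $\tfrac1{4\lambda d}$.

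There is, however, a genuine gap at the end: loops. The paper's setting allows them (and the application in Corollary \ref{corol}, where $S$ is replaced by the multi-set $S^{[k]}$ of words of length $k$, unavoidably produces loops, since many words represent the identity), and the paper's proof treats them separately: a loop survives with probability zero, and the number of loops at $x$ is at most $c(1)\leq \lambda d$ by \eqref{est1} with $n=1$; this is precisely where the $-\tfrac14$ in the statement comes from. Your final step asserts that the per-edge bound $\mathbb{P}[e \in {\rm FMSF}] \geq \tfrac1{4\lambda d}$ ``applies to each of them individually,'' but for a loop one has $\underline e = \overline e$, the endpoints are trivially connected in $\omega_{<\omega_e}$ (your union bound over paths of length $n\geq 1$ misses the empty path), and the survival probability is in fact $0$. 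Hence your claimed strengthening $\delta_{\rm FMSF}(G) \geq \tfrac1{4\lambda}$ is false for graphs with loops, and as written your proof only establishes the theorem for loopless graphs. The repair is exactly the paper's bookkeeping: discard the loops, bound their number by $\lambda d$, and apply your estimate to the remaining at least $d - \lambda d$ edges, which yields $\frac{d-\lambda d}{4\lambda d} = \frac1{4\lambda} - \frac14$.
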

\begin{proof} The second inequality is clear.
Let us enumerate the edges at the vertex $x$ by $s_1,\dots,s_d$. For $1 \leq i \leq d$, the probability that $s_i$ survives is zero if $s_i$ is a loop. Let us assume that $s_i$ is not a loop and that its label is $\rho$. Then, the probability that it is cut because of a simple cycle of length $n$ rooted at $x$ and starting with $s_i$ is $\rho^{n-1}$. Moreover, by \eqref{est1} there are at most $(\lambda d)^{n-1}$ such cycles.
Hence, the probability that $s_i$ survives is bounded from below as follows:
\begin{eqnarray*}
{\mathbb P}(s_i \mbox{ survives}) &\geq& \int_{0}^{\frac1{2 \lambda d}} \left(1 - \sum_{n=2}^{\infty} (\lambda d)^{n-1} \rho^{n-1} \right)\ d\rho \\
&=& \int_{0}^{\frac1{2 \lambda d}} \left(2 - \frac{1}{1 - \lambda d \rho}\right) \ d\rho\\
&=& \frac{1 - \ln(2)}{ \lambda d}\\
&\geq& \frac{1}{4 \lambda d}.
\end{eqnarray*} 
Since this argument works for all edges adjacent to the vertex $x$ which are not loops, and the number of loops is bounded from above by $\lambda d$, we obtain
$$\delta_{\rm FMSF}(G) =  {\mathbb E} \left(\sum_{i=1}^d \ [s_i \mbox{ survives}] \right) =  \sum_{i=1}^d{\mathbb P} \left(s_i \mbox{ survives} \right) \\
\geq \frac{d - \lambda d}{4 \lambda d} = \frac1{4 \lambda} - \frac14.$$ Here, we denoted by $[s_i \mbox{ survives}]$ the $\{0,1\}$-valued function on $[0,1]^E$, which indicates if $s_i$ survived the cutting procedure or not.  This proves the claim.
\end{proof}

Let $\Gamma$ be a finitely generated group and $S$ a finite multi-set with $S=S^{-1}$ which generates $\Gamma$. More formally, a multi-set is a map $\alpha\colon S \to \Gamma$ and $S^{-1}=S$ means that there is an involution $\iota \colon S \to S$, such that $\alpha(\iota(s))=\alpha(s)^{-1}$. We will omit $\alpha$ and $\iota$ in the notation and just write $s^{-1}$ for the inverse.

Let ${\rm Cay}(\Gamma,S)$ be the associated Cayley graph, i.e.\ the vertex set is $\Gamma$ and for every $g \in \Gamma$ and $s \in S$, we have an oriented edge $(g,s,sg)$. The opposite edge is given by $(sg,s^{-1},g)$. We denote by $\delta_{\rm FMSF}(\Gamma,S)$ the expected degree of the free minimal spanning forest of the Cayley graph ${\rm Cay}(\Gamma,S)$.

\begin{cor} \label{corol}
Let $\Gamma$ be a finitely generated non-amenable group. The expected degree of the free minimal spanning forest depends on the generating multi-set. Moreover, for every $n \in \N$, there exists a finite multi-set $S$ which generates the group $\Gamma$, such that $\delta_{\rm FMSF}(\Gamma,S) \geq n$.
\end{cor}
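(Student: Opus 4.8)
The plan is to invoke Theorem \ref{main} and to control how the spectral radius of $\mathrm{Cay}(\Gamma,S)$ behaves as one enlarges the generating multi-set $S$. Since $\Gamma$ is non-amenable, Kesten's criterion gives that for any finite symmetric generating multi-set $S_0$ one has $\lambda(\Gamma,S_0)<1$, hence the lower bound $\tfrac{1}{4\lambda}-\tfrac14$ of Theorem \ref{main} is strictly positive; this already shows that $\delta_{\mathrm{FMSF}}(\Gamma,S)$ is not identically equal to some fixed value, provided we can also produce generating multi-sets on which it is small. To make $\delta_{\mathrm{FMSF}}(\Gamma,S)$ large, the idea is to replace $S_0$ by a multi-set $S$ in which each element of $S_0$ is listed with high multiplicity, say $S=k\cdot S_0$ (each generator and its inverse repeated $k$ times). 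Then $\mathrm{Cay}(\Gamma,S)$ is just $\mathrm{Cay}(\Gamma,S_0)$ with every edge replaced by $k$ parallel edges; its degree is $d=k\,d_0$, while the random walk operator is unchanged up to the scalar $k$, so the spectral radius $\lambda=\lambda(\Gamma,S)$ equals $\lambda(\Gamma,S_0)=:\lambda_0<1$, \emph{independent of $k$}. Plugging into Theorem \ref{main},
\begin{equation*}
\delta_{\mathrm{FMSF}}(\Gamma,S)\ \geq\ \frac{1}{4\lambda_0}-\frac14\ \cdot\ \text{(no—see below)}
\end{equation*}
wait: the bound in Theorem \ref{main} is $\tfrac{1}{4\lambda}-\tfrac14$, which does not grow with $d$. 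So multiplicity alone via that exact inequality is not enough; one must re-examine the proof of Theorem \ref{main}.

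The key observation is that the \emph{proof} of Theorem \ref{main} actually shows $\delta_{\mathrm{FMSF}}(G)\geq \tfrac{d-(\#\mathrm{loops})}{4\lambda d}\geq \tfrac{d-\lambda d}{4\lambda d}$, but more usefully, rerunning it for $G=\mathrm{Cay}(\Gamma,S)$ with $S=k\cdot S_0$: an edge $s_i$ at $x$ is cut by a simple cycle of length $n$ rooted at $x$ starting with $s_i$ with probability $\rho^{n-1}$ where $\rho$ is its label, and there are at most $(\lambda_0 d)^{n-1}=(\lambda_0 k d_0)^{n-1}$ such cycles by \eqref{est1}. So ${\mathbb P}(s_i\text{ survives})\geq \int_0^{1/(2\lambda_0 k d_0)}\bigl(2-\tfrac1{1-\lambda_0 k d_0\rho}\bigr)\,d\rho=\tfrac{1-\ln 2}{\lambda_0 k d_0}\geq \tfrac1{4\lambda_0 k d_0}$, and summing over the $\leq kd_0$ non-loop edges at $x$ only recovers $\tfrac{1}{4\lambda_0}-\tfrac14$ again. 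Thus the naive multiplicity trick genuinely fails to help, and the real mechanism must be different: one should instead choose $S$ so that the spectral radius $\lambda(\Gamma,S)$ itself tends to its minimal possible value while the degree stays controlled, or exploit that $\delta_{\mathrm{FMSF}}$ is bounded below by the expected degree of the \emph{wired} minimal spanning forest, which for non-amenable $G$ can be pushed up.

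Concretely, I would argue as follows. First record that $\delta_{\mathrm{FMSF}}(\Gamma,S)$ does depend on $S$: for any $S_0$ one has the strictly positive lower bound $\tfrac{1}{4\lambda_0}-\tfrac14>0$ from Theorem \ref{main} since $\lambda_0<1$ by non-amenability, whereas by adding to $S_0$ a large number of copies of a single non-torsion-free relation's worth of generators—or more simply, by a multiset $S$ whose Cayley graph has many short cycles, forcing many edges to be cut—one can make $\delta_{\mathrm{FMSF}}(\Gamma,S)$ arbitrarily close to its generic small values, so the two differ. For the unbounded-degree assertion, the cleanest route is: pass to a multi-set $S=S_0^{*m}$ obtained by listing all words of length $\le m$ in $S_0$ (the $m$-step generating set). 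Then $\mathrm{Cay}(\Gamma,S)$ has degree $d_m\le (\#S_0)^m$, its spectral radius $\lambda_m$ satisfies $\lambda_m d_m=\|A_m\|$ where $A_m=\sum_{j=1}^m A_{S_0}^{\,j}$ is the adjacency operator of $S$, and one checks $\lambda_m\to 0$ as $m\to\infty$ because $\|A_{S_0}^j\|=(\lambda_0 d_0)^j$ decays geometrically while $d_m$ grows only like $(\#S_0)^m$ with $\#S_0>\lambda_0 d_0 \cdot(\text{something})$; hence $\tfrac{1}{4\lambda_m}-\tfrac14\to\infty$, and Theorem \ref{main} finishes it. The main obstacle—and the step I would spend the most care on—is verifying that one can indeed make $\lambda(\Gamma,S)$ go to zero along some sequence of finite generating multi-sets: this requires separating the operator norm $\|A_S\|$ of the new adjacency operator from the new degree $\#S$, which for the $m$-ball multi-set amounts to a quantitative comparison between $\sum_{j\le m}(\lambda_0 d_0)^j$ and $\sum_{j\le m}(\#S_0^{\,j})$, i.e.\ showing the walk stays non-amenable-with-a-gap uniformly; I would handle this by choosing, instead of the full $m$-ball, a multi-set consisting of $N$ copies of $S_0$ together with a single extra generator, or by the standard fact that $\lambda(\Gamma,S)\le \#S_0 \cdot \lambda_0 / \#S$ when $S\supseteq S_0$, which makes $\lambda(\Gamma,S)$ small once $\#S$ is large while $S$ still generates $\Gamma$.
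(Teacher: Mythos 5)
Your eventual mechanism is the right one, and it is exactly the paper's: replace $S_0$ by a multi-set of bounded-length words in $S_0$, so that the spectral radius of the new Cayley graph decays geometrically in the word length, and then apply Theorem \ref{main}. The paper takes the multi-set $S^{[k]}$ of words of length exactly $k$: the Markov operator of ${\rm Cay}(\Gamma,S^{[k]})$ is the $k$-th power of the original one, so the new spectral radius is exactly $\lambda_0^{\,k}$, whence $\delta_{\rm FMSF}(\Gamma,S^{[k]})\ge \frac{1}{4\lambda_0^{\,k}}-\frac14\to\infty$ by Kesten ($\lambda_0<1$); dependence on the generating multi-set is then automatic, since an unbounded quantity cannot take the same value for all multi-sets. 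Your variant with all words of length at most $m$ also works, via the ratio bound $\lambda_m\le\bigl(\sum_{j\le m}(\lambda_0 d_0)^j\bigr)/\bigl(\sum_{j\le m} d_0^{\,j}\bigr)\le \frac{\lambda_0 d_0}{\lambda_0 d_0-1}\,\lambda_0^{\,m}$, and it even has the minor advantage that generation is automatic because $S_0$ is contained in the ball (for words of length exactly $k$ one should take $k$ odd to avoid generating a proper subgroup).

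However, as written the proposal does not commit to this argument and ends with claims that are false and must be removed. First, $\|A_{S_0}^{\,j}\|=(\lambda_0 d_0)^j$ does \emph{not} decay geometrically: $\lambda_0 d_0=\|A_{S_0}\|\ge\sqrt{d_0}>1$ for any Cayley graph, so these norms grow; what decays is the ratio $(\lambda_0 d_0)^j/d_0^{\,j}=\lambda_0^{\,j}$, and that is the computation you need to state. Second, both of your closing ``fixes'' fail: a multi-set consisting of $N$ copies of $S_0$ plus one extra generator has spectral radius converging to $\lambda_0$ (not to $0$) as $N\to\infty$, and the purported ``standard fact'' $\lambda(\Gamma,S)\le \#S_0\,\lambda_0/\#S$ for $S\supseteq S_0$ is false -- your own earlier (correct) observation that duplicating generators leaves the spectral radius unchanged is already a counterexample. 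Third, the passage about establishing dependence on the generating multi-set by engineering many short cycles is vague and unnecessary: once unboundedness of $\delta_{\rm FMSF}$ along the sequence of word multi-sets is proved, dependence follows immediately. Trimmed down to the ball (or exact-length-word) computation plus Theorem \ref{main}, your proof is correct and essentially identical to the paper's.
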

\begin{proof}
Since $\Gamma$ is non-amenable, $\lambda < 1$ for any finite generating set $S$ by Kesten's theorem, see \cite{kesten}. Replacing $S$ by the multi-set $S^{[k]}$ of words of length $k$ in $S$, then we get a spectral radius $\lambda^k$ and thus from Theorem \ref{main}:
$$\frac1{4 \lambda^k} - \frac14 \leq \delta_{\rm FMSF}(\Gamma,S^{[k]}) \leq d^k.$$ In particular, $\delta_{\rm FMSF}(\Gamma,S^{[k]})$ cannot be independent of $k$ and is unbounded as $k$ tends to infinity.
\end{proof}

The previous corollary answers Question 6.13 in \cite{MR2271476}, where it was asked whether the expected degree of $\rm FMSF$ depends on the choice of a generating set or not. Russ Lyons informed us that as a solution to  \cite[Exercise 11.16]{lyons}, it was known that dependence on the generating set holds for some specific non-amenable groups.
As a consequence of Corollary \ref{corol}, we can now conclude that ${\rm WMSF}(\Gamma,S) \neq {\rm FMSF}(\Gamma,S)$ for any non-amenable group and some suitable multi-set of generators $S$, see \cite{MR2271476} for more definitions. As an immediate consequence of \cite[Proposition 3.6]{MR2271476} and Corollary \ref{corol}, we also get that $p_c(\Gamma,S) < p_u(\Gamma,S)$ for any non-amenable group and a suitable multi-set of generators $S$. This result was shown by Pak-Smirnova-Nagnibeda as \cite[Theorem 1]{MR1756965} -- and we refer to this paper for definitions if necessary. Previously, \cite[Theorem 1]{MR1756965} was used to conclude that ${\rm WMSF}(\Gamma,S) \neq {\rm FMSF}(\Gamma,S)$ via \cite[Proposition 3.6]{MR2271476}.
Another consequence of Corollary \ref{corol} is an  elementary proof of part of \cite[Proposition 12]{MR2534099}, which is the key new technical result in the Lyons-Gaboriau proof of the measurable-group theoretic solution to von Neumann's problem. Note that it does not follow from our results that the cluster equivalence relation of ${\rm FMSF}$ is ergodic, but for the purposes of a proof of \cite[Theorem 1]{MR2534099} this can be fixed by a result of Chifan-Ioana \cite[Corollary 9]{ioana}, see also the remarks in \cite[Section 4.2]{ioana}. 

Let us also mention, that it was proved by Timar in \cite{MR2243871}, that if ${\rm WMSF}(\Gamma,S) \neq {\rm FMSF}(\Gamma,S)$, then almost surely every tree in ${\rm FMSF}$ has infinitely many ends.

\section{Relative minimal spanning forests}

Let $\Gamma$ be a non-torsion group and let $a \in \Gamma$ be a non-torsion element. Let $S \subset \Gamma$ be a finite generating set with $S^{-1}=S$ and consider the Cayley graph $G:={\rm Cay}(\Gamma,S)$. We set $d:=|S|$. We will assume that $a \in S$. If this is the case, then for any $g \in \Gamma$ the set $L := \{a^ng \mid n \in \Z\} \subset \Gamma$ is a bi-infinite line in $G$. We denote by $L(a)$ the subgraph of $G$ formed by the union of all such bi-infinite lines.

Note that $\Gamma$ acts by automorphisms on $G=(\Gamma,E)$.
An element of $ x \in \{0,1\}^E$ is identified with a subgraph $G(x) \subseteq G$. A random spanning sub-forest of $G$ is a probability measure $\sigma$ on $\{0,1\}^{E}$, whose support is contained in the set of spanning forests. We call such a random subforest invariant, if the probability measure is invariant with respect to the natural $\Gamma$ action on $E$. An invariant random subforest $\sigma$ is called a factor of i.i.d.\ process if there exists a measurable and $\Gamma$-equivariant map $\Phi \colon [0,1]^{E} \to \{0,1\}^E$, such that $\sigma = \Phi_*(\mu^{\otimes E})$, where $\mu$ denotes the Lebesgue measure on $[0,1]$. If $\sigma$ is an invariant random subforest of $G$, then we denote by ${\rm deg}(\sigma)$ the expected degree of the vertex $e \in \Gamma$.

We will not recall the definition of cost of a p.m.p.\ essentially free action here and instead refer to Gaboriau's foundational work \cite{gaboriau} and the book \cite{kechrismiller}, where everything is explained in detail. The cost of the action $\Gamma \curvearrowright (X,\lambda)$ is denoted by ${\rm cost}(\Gamma \curvearrowright X)$.

The following theorem deals with a relative version of the free minimal spanning forest, paying at the same time more attention to a control of the expected degree in terms of the cost of the action $\Gamma \curvearrowright [0,1]^E$. Results of this type have been studied before, see e.g.\ \cite[Lemma 28.11]{kechrismiller} or \cite[Corollary 40]{pichot}.

\begin{thm} \label{relmin}
Let $\Gamma$ be a non-torsion group and let $a \in \Gamma$ be a non-torsion element. Let $S$ be a finite symmetric generating set with $a \in S$ and denote the associated Cayley graph by $G$. There exists a random spanning sub-forest $\sigma$ of $G$, such that
\begin{enumerate}
\item $\sigma$-a.s. $L(a)$ is contained in the subforest, 
\item $\sigma$ is a factor of i.i.d.\ process, and
\item ${\rm deg}(\sigma) \geq 2 \cdot {\rm cost}(\Gamma \curvearrowright [0,1]^E)$.
\end{enumerate}
\end{thm}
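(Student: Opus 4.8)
The plan is to build $\sigma$ by a two-stage construction. First I would include the skeleton $L(a)$ of bi-infinite $a$-lines deterministically; since $a$ is non-torsion this is a genuine subforest (each connected component of $L(a)$ is a bi-infinite line, hence a tree), and including it is manifestly $\Gamma$-equivariant and a (degenerate) factor of the i.i.d.\ process. This secures item (1) and contributes exactly $2$ to the expected degree at $e$. The remaining work is to adjoin enough further edges of $G$ to push ${\rm deg}(\sigma)$ up to $2\cdot{\rm cost}(\Gamma\curvearrowright[0,1]^E)$ while (a) staying a forest and (b) staying a factor of i.i.d. The natural mechanism is to run a free-minimal-spanning-forest-type cutting procedure \emph{relative to} the already-present graph $L(a)$: take an i.i.d.\ $[0,1]$-labelling of the edges of $G$ not in $L(a)$, and for every simple cycle that uses at least one such edge, cut a maximal-label edge among the edges of the cycle that lie outside $L(a)$ (never cutting $L(a)$ itself). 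One must check that this still produces a forest — the point is that any simple cycle meets the complement of $L(a)$ in at least one edge because $L(a)$ is itself cycle-free — and that the resulting map $[0,1]^{E}\to\{0,1\}^{E}$ is Borel and $\Gamma$-equivariant, exactly as for $\Phi$ in Section \ref{FMSF}.

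Next I would relate ${\rm deg}(\sigma)$ to cost. Let $\RR$ be the orbit equivalence relation of $\Gamma\curvearrowright([0,1]^E,\mu^{\otimes E})$, or rather the sub-relation generated by the random forest $\sigma$ viewed as a graphing: an edge $\{g,sg\}$ of $G$ that survives in $\sigma$ gives, via the $\Gamma$-action, a partial isomorphism of the probability space, and the collection of these over $s\in S$ is a graphing $\Phi$ of a sub-equivalence-relation $\RR_\sigma\subseteq\RR$. The standard cost formula for a graphing of an essentially free p.m.p.\ relation (see \cite[Lemma 28.11]{kechrismiller} or Gaboriau \cite{gaboriau}) gives ${\rm cost}_{\mu}(\Phi)=\tfrac12\,{\rm deg}(\sigma)$, the factor $\tfrac12$ coming from the identification of the oriented edge $\{e,\hat e\}$ with a single partial isomorphism and the fact that ${\rm deg}(\sigma)$ counts oriented half-edges at $e$. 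Since a graphing of $\RR_\sigma$ is in particular a set of partial isomorphisms inside $\RR$, one has ${\rm cost}_{\mu}(\Phi)\geq{\rm cost}(\RR_\sigma)$ only trivially; the inequality we actually need runs the other way, so the real content is that $\RR_\sigma$ must itself be a \emph{connected} (ergodic-components-spanning) sub-relation — equivalently, that the forest $\sigma$ together with $L(a)$ still allows one to connect $\Gamma$-orbits — so that ${\rm cost}(\Gamma\curvearrowright[0,1]^E)\le{\rm cost}(\RR_\sigma)\le{\rm cost}_\mu(\Phi)=\tfrac12{\rm deg}(\sigma)$. This is where the $a$-line skeleton is essential: because $a$ has infinite order, the lines $L(a)$ already generate (inside each orbit) the cyclic group $\langle a\rangle$, and adding the surviving FMSF-type edges across different $\langle a\rangle$-cosets connects everything up, so $\RR_\sigma=\RR$ and cost is not lost.

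I expect the main obstacle to be exactly this last connectivity/cost-comparison step: showing that the relative cutting procedure, while forced to leave all of $L(a)$ intact, still yields a graphing that generates the \emph{full} equivalence relation $\RR$ (or at least a relation of cost $\ge{\rm cost}(\Gamma\curvearrowright[0,1]^E)$), and doing so \emph{equivariantly and as a factor of i.i.d.} A clean way to arrange this is to first fix a factor-of-i.i.d.\ \emph{connected} spanning graphing $\Psi_0$ of $\RR$ whose cost is within $\varepsilon$ of ${\rm cost}(\Gamma\curvearrowright[0,1]^E)$ — such objects exist for essentially free actions — then interpret its edges as a spanning connected subgraph of $G$, superimpose $L(a)$, and run the minimal-spanning-forest cut only on the cycles thereby created, always protecting $L(a)$ and breaking ties by an auxiliary independent labelling. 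The output subforest $\sigma$ is still connected as a graphing (cutting cycles never disconnects), contains $L(a)$, is a factor of i.i.d., and has ${\rm deg}(\sigma)=2\,{\rm cost}_\mu(\text{surviving graphing})\ge 2\,{\rm cost}(\RR)-2\varepsilon$; letting $\varepsilon\to0$ along a sequence of such constructions (or optimising directly) gives item (3). The routine verifications — Borel-ness of the cut map, that no simple cycle is disjoint from $E\setminus L(a)$, $\Gamma$-equivariance, and the bookkeeping in the cost formula — I would relegate to short lemmas.
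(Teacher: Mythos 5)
There is a genuine gap, and it sits exactly where you predicted: the cost inequality (3). Your first route (protect $L(a)$ and, for every simple cycle, simultaneously delete the maximal-label edge outside $L(a)$) does give a sub-forest of $G$ containing $L(a)$ as a factor of i.i.d., but it gives no control of the degree in terms of cost. A global, simultaneous $\mathrm{FMSF}$-type cut can disconnect the graph (the ordinary $\mathrm{FMSF}$ of a transitive graph is in general not a single tree), and your claim that the surviving edges ``connect everything up, so $\RR_\sigma=\RR$'' is asserted rather than proved; note that if it were true, the relative $\mathrm{FMSF}$ would be a treeing of the full orbit relation, a far stronger statement than the theorem. The slogan ``cutting cycles never disconnects'' is only valid when you remove one edge at a time from a cycle that is \emph{still present}; it fails for the simultaneous cut over all simple cycles of the original labelling. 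Making this sequential cutting measurable and $\Gamma$-equivariant is precisely the content of the paper's proof: following \cite[Lemma 28.11]{kechrismiller}, one colours the simple cycles by (length, depth), where the depth is well defined because a.s.\ there is no infinite chain of adjacent cycles with increasing labels, so that each colour class consists of pairwise disjoint cycles; one then cuts, colour class by colour class, one non-$L(a)$ edge from each cycle that has survived the previous stages. Every finite stage is a connected spanning subgraph of $G$, hence a graphing of the whole relation with half expected degree at least $\mathrm{cost}(\Gamma\curvearrowright[0,1]^E)$, and this bound passes to the decreasing limit, which is the desired forest. No near-optimal graphing and no connectivity of the final forest are needed.

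Your proposed ``clean way'' does not repair the gap. A graphing of $\RR(\Gamma\curvearrowright[0,1]^E)$ with cost within $\varepsilon$ of the cost of the action consists of arbitrary partial isomorphisms of the relation; its edges at a point join $x$ to $gx$ for group elements $g$ typically outside $S$ (or are not of this form at all), so it cannot be ``interpreted as a spanning connected subgraph of $G$'', and the output of your superposition-and-cut would not be a random sub-forest of $G$ as the theorem requires. Even granting such an interpretation, the same simultaneous-versus-sequential cutting problem reappears, so connectedness of the surviving graphing is not justified; and the final $\varepsilon\to 0$ step does not by itself produce a single $\sigma$ satisfying (1)--(3): being a forest and containing $L(a)$ are closed conditions and the expected degree passes to weak* limits, but the factor-of-i.i.d.\ property need not. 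The paper's stage-wise construction avoids all of these issues at once, which is why the depth/colouring argument, not the relative $\mathrm{FMSF}$, is the essential idea.
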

\begin{proof} The proof follows the ideas of the proof of \cite[Lemma 28.11]{kechrismiller}. First of all, we pick an isomorphism $[0,1] \cong [0,1]^\N$ and think of $[0,1]^{\N}$ as an infinite stack of elements in the unit interval. Let $n \in \N$. We first define the graph $Z_n(G)$ of $n$-cycles in $G$. Its vertex set is the set $C_n$ of (unrooted and simple) $n$-cycles in $G$ and two cycles are connected by an edge if they are different and have a common vertex. It is clear that the maximum degree of $Z_n(G)$ is less or equal $n d^n$, where we have set $d:=|S|$. Using the $[0,1]$-labels of the edges of $G$, we construct an i.i.d.\ labelling of $C_n$ with elements in $[0,1]$ in an equivariant way. Now, let $c_0 \in C_n$ labelled $\lambda \in [0,1]$. The probability that there is a chain $c_0,c_1,\dots,c_k$ of length $k$ of pairwise adjacent cycles so that their labels are increasing in each step is bounded above by
$(1- \lambda^{nd^n})^k$. We see from this, that the probability of existence of an infinite such chain is zero. Thus -- for almost every labelling of $G$ -- we can assign to each $n$-cycle the length of the longest such chain, and call it the depth of this $n$-cycle (with respect to the given labelling of $G$).

In this way, we have for almost every labelling $x \in [0,1]^E$, found a map
$$\varphi_x \colon \bigsqcup_{n \geq 1} C_n \to \N \times \N,$$
which maps an $n$-cycle of depth $k$ (with respect to the labelling $x \in [0,1]^E$) to the pair $(n,k) \in \N \times \N$. We call this map a colouring of the cycles associated with $x \in [0,1]^E$. Note that this colouring is equivariant in the sense that $\varphi_{xg}(c) = \varphi_x(cg^{-1})$, for almost all $x \in [0,1]^E$ and all $c \in \sqcup_{n \geq 1} C_n$.

Thus, we have used the first element in our stack $[0,1]^\N$ to set up an $\Gamma$-equivariant colouring of the set of cycles. We now will use the other elements in the stack to set up an infinite recursive $\Gamma$-equivariant cutting procedure which will have the property, that it does not disconnect the graph $G$. Let's pick a standard enumeration $\alpha \colon \N \times \N \cong \N$ and set $\psi_x:=\alpha \circ \varphi_x$. We start by cutting all cycles in $\psi_x^{-1}(0)$ in a way yet to be described. Note that the set $\psi_x^{-1}(0)$ consists of disjoint cycles. For each $c \in \psi_x^{-1}(0)$, we cut the edge with the maximal label (this is the second in our stack $[0,1]^\N$) not in $L(a)$. Clearly, this does not disconnect $G$. We now proceed in a similar way (using the next element in the stack) with the cycles in $\psi^{-1}_x(1)$ -- again, this is a set of disjoint cycles -- which still exist after the cutting that has been done already, etc.

In each step, the graph remains connected, and hence the expected degree cannot be less than $2 \cdot {\rm cost}(\Gamma \curvearrowright [0,1]^E)$. Moreover, the resulting graphs will always contain $L(a)$. Taking limits, we see that all cycles of $G$ have been cut and we obtain a random sub-forest of $G$ containing $L(a)$, whose expected degree is at least ${\rm cost}(\Gamma \curvearrowright [0,1]^E)$. We can now define $\Phi \colon [0,1]^E \to \{0,1\}^E$ by mapping $x \in [0,1]^E$ to the result of the cutting process. This finishes the proof.
\end{proof}

\begin{rem} \label{remabert}
Let $\Gamma$ be a finitely generated and infinite group. Recall, an infinite group is said to have fixed price one if all its p.m.p.\ essentially free actions have cost one.
Then, ${\rm cost}(\Gamma \curvearrowright [0,1]^E)>1$ if and only if $\Gamma$ does not have fixed price equal to one, as proved by Ab\'ert-Weiss. 
Indeed, by the Ab\'ert-Weiss theorem, ${\rm cost}(\Gamma \curvearrowright [0,1]^E)$ is the maximum among costs of all possible p.m.p.\ essentially free actions of the group $\Gamma$. Hence, if this cost is equal to one, the group must have fixed price one. The other implication is obvious.
\end{rem}

We now change the perspective slightly and consider more general invariant random spanning forests. They are no longer bound to be sub-forests of $G$. Thus, we study more generally probability measures on $\{0,1\}^{\Gamma \times \Gamma}$, invariant under the diagonal right $\Gamma$-action on $\Gamma \times \Gamma$, see \cite{MR2552305} for more details. The expected degree ${\rm deg}(\sigma)$ of an invariant random spanning forest $\sigma$ defined in a similar way as before. The width of an invariant random spanning forest $\sigma$ ist defined to be the number of vertices $g \in \Gamma$, so that the probability that an edge between vertices $e$ and $g$ exists is positive. We denote this number by ${\rm width}(\sigma)$.

Let us now assume that there is some invariant random sub-forest $\tau$ of $G$, such that ${\rm deg}(\tau)>2$ and $L(a)$ is a.s.\ contained in the sub-forest. Let $b \in S \setminus \{a,a^{-1}\}$, such that
$\tau( \{x \in \{0,1\}^E \mid (b, e) \in G(x) \} )>0$ and set $$X:= \left\{x \in \{0,1\}^E \mid (b, e) \in G(x) \right\} \subset \{0,1\}^E.$$ For each $n \in \N$, we now define a Borel map $\Theta_n \colon \{0,1\}^E \to \{0,1\}^{\Gamma \times \Gamma}$ which sends a subgraph $G(x) \subseteq G$  to the graph formed by all edges $(a^ib a^{-i}g,g)$ for $1 \leq i \leq n$, whenever $a^ib a^{-i}g$ and $g$ lie in the same connected component of $G(x)$. It is clear that $\Theta_n$ is $\Gamma$-equivariant. We define $\tau_n$ to be the push-forward of $\tau$ with respect to $\Theta_n$, i.e. $\tau_n:= (\Theta_n)_*(\tau)$.

\begin{lem} \label{comp}
For each $n \in \N$, the measure $\tau_n$ is an invariant random spanning forest. Moreover, we have ${\rm width}(\tau_n) = 2n$ and ${\rm deg}(\tau_n) = 2n \cdot \tau(X)$.
\end{lem}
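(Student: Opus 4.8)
The plan is to verify the three assertions in turn, unwinding the definition of $\Theta_n$ and pushing measures forward. First I would check that $\tau_n$ is supported on spanning forests. Since $L(a)$ is $\tau$-a.s.\ contained in the sub-forest $G(x)$, for each $g$ and each $1 \leq i \leq n$ the vertices $a^ig$ and $g$ lie in the same connected component of $G(x)$ (one passes along the line $L(a)$ through $g$), and hence so do $a^iba^{-i}g$ and $g$ precisely when $a^iba^{-i}g$ and $a^ig$ do — which in turn is governed by whether the edge $(b,e)$-translate survives, i.e.\ whether $a^{-i}g \in X$ after translation. The key point is that $\Theta_n(G(x))$ adds, at each vertex $g$, at most the $2n$ potential edges to $a^{\pm i}ba^{\mp i}g$ for $1 \leq i \leq n$, and — crucially — it never creates a cycle: any edge it adds joins two vertices that were already in the same component of the forest $G(x)$, but these edges all connect vertices at ``$a$-distance'' patterns that, together with the tree structure of $G(x)$, cannot close up (more carefully: an edge $(a^iba^{-i}g,g)$ is added only when the two endpoints are already connected in $G(x)$, so in the graph $\Theta_n(G(x))$ the endpoints are connected by two paths; but one must argue the two endpoints are never joined in $\Theta_n(G(x))$ by a path avoiding the new edge, which follows because the new edges at distinct ``scales'' $i$ and between distinct vertices do not chain into a simple cycle). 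I expect this acyclicity verification to be the main obstacle, and I would handle it by observing that $\Theta_n(G(x))$ is a disjoint union, over connected components $T$ of $G(x)$, of graphs on the vertex set $V(T)$, each of which is a \emph{bounded-degree} graph all of whose edges join vertices within a fixed finite pattern; since each such edge's endpoints were already connected in the tree $T$ and the added edges never connect two \emph{different} components, the resulting graph on $V(T)$ has the same vertex set and — after a direct check that no simple cycle is formed among the $2n$ edge-families — remains a forest.

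Granting that $\tau_n$ is supported on forests, invariance is immediate: $\Theta_n$ is $\Gamma$-equivariant (stated in the excerpt, and clear from the formula $(a^iba^{-i}g,g)$, which is compatible with the diagonal right $\Gamma$-action), and $\tau$ is $\Gamma$-invariant, so $\tau_n = (\Theta_n)_*\tau$ is $\Gamma$-invariant. For the width: an edge between $e$ and $h \in \Gamma$ can appear in $\Theta_n(G(x))$ only if $h = a^iba^{-i}$ or $h = a^{-i}ba^i$ for some $1 \leq i \leq n$ (taking $g=e$, resp.\ $g = a^{-i}ba^i$ and relabelling), giving at most $2n$ candidate vertices $h$; and each of these occurs with positive probability because $\tau(X)>0$ forces, by invariance, that the relevant translate of $X$ also has positive $\tau$-measure and on that event the corresponding edge is present. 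Hence ${\rm width}(\tau_n) = 2n$, once one checks the $2n$ vertices $a^{\pm i}ba^{\mp i}$ are distinct — which uses that $a$ is non-torsion and $b \notin \{a,a^{-1}\}$, so conjugating $b$ by distinct powers of $a$ gives distinct elements (and none equals $e$).

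Finally, for the expected degree I would compute ${\rm deg}(\tau_n) = \mathbb{E}_{\tau_n}(\deg e) = \mathbb{E}_{\tau}\bigl(\#\{\text{edges of }\Theta_n(G(x))\text{ at }e\}\bigr)$. The edges at $e$ are exactly those $(a^iba^{-i},e)$ and $(e, a^iba^{-i})$'s reverse, together with edges $(e, h)$ coming from $g$ with $a^iba^{-i}g = e$, i.e.\ $g = a^ib^{-1}a^{-i}$; so there are $2n$ potential edges at $e$, indexed by $i \in \{1,\dots,n\}$ and a choice of ``direction''. By $\Gamma$-equivariance of $\Theta_n$ and $\Gamma$-invariance of $\tau$, each of these $2n$ edges is present with probability exactly $\tau(X)$: the edge $(a^iba^{-i},e)$ is present iff $a^iba^{-i}$ and $e$ lie in the same component of $G(x)$, which — again using $L(a) \subseteq G(x)$ to slide along the line — happens iff the edge $(b,e)$ survives in the translated configuration, an event of $\tau$-probability $\tau(X)$ by invariance. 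Summing over the $2n$ edges, linearity of expectation gives ${\rm deg}(\tau_n) = 2n \cdot \tau(X)$, as claimed. The only subtlety here is to make sure the $2n$ candidate edges at $e$ are genuinely distinct (no coincidences among the $a^{\pm i}ba^{\mp i}$), which is exactly the non-torsion input already used for the width.
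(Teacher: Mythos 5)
The central claim of the lemma is that $\tau_n$ is supported on forests, and this is exactly where your proposal has a genuine gap. First, note that $\Theta_n(G(x))$ does not consist of $G(x)$ with some edges ``added'': it consists \emph{only} of the new edges $(a^iba^{-i}g,g)$. Under the reading your wording suggests (the old forest plus new edges joining already-connected vertices) the statement would be false outright, since adjoining such an edge to a forest always creates a cycle. Under the correct reading, your argument reduces acyclicity to ``a direct check that no simple cycle is formed among the $2n$ edge-families'' --- but that \emph{is} the assertion to be proved, and none of the surrounding observations (bounded degree, edges staying inside one component of $G(x)$, edges following a fixed finite pattern) prevents a cycle: going around a putative simple cycle of $\Theta_n(G(x))$ and replacing each new edge by a path inside the corresponding tree component of $G(x)$ only produces a closed \emph{walk} in a tree, which is no contradiction. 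The paper's proof supplies the missing mechanism, which is algebraic: the elements $a^iba^{-i}$, $1\le i\le n$, freely generate a rank-$n$ free subgroup of the free group on $\{a,b\}$, so a cycle among the new edges --- each of which is realized inside $G(x)$ by sliding along the $a$-lines of $L(a)$ and crossing a $b$-edge (this is where $L(a)\subseteq G(x)$ a.s.\ and the set $X$ enter) --- unfolds into a genuine simple cycle of $G(x)$, contradicting that $\tau$-a.s.\ $G(x)$ is a forest. Some input of this kind is indispensable, and your proposal contains none.

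A secondary but concrete error: you justify the distinctness of the $2n$ candidate neighbours of $e$ by saying that $a$ non-torsion and $b\notin\{a,a^{-1}\}$ force the conjugates $a^{\pm i}ba^{\mp i}$ to be distinct. That implication is false as group theory: if $b$ commutes with $a$ (perfectly compatible with $a$ non-torsion and $b\neq a^{\pm1}$) then all $a^iba^{-i}$ coincide, and if $b^2=e$ then $a^iba^{-i}=a^ib^{-1}a^{-i}$. Excluding such coincidences in this setting cannot be done by that shortcut; it requires the dynamical hypotheses ($\tau(X)>0$, $L(a)$ contained a.s., no cycles) --- for instance, if some $a^c$ with $c\neq 0$ commuted with $b$, two surviving translated $b$-edges along the $a$-lines would close a cycle, and Poincar\'e recurrence would force $\tau(X)=0$. (The paper itself calls the width and degree computation clear, so brevity is not the issue; the specific justification you give is.) Relatedly, the condition defining $\Theta_n$ is that $a^iba^{-i}g$ and $g$ lie in the same \emph{component} of $G(x)$, which is weaker than survival of the translated $b$-edge, so your asserted equivalence ``the edge is present iff the edge $(b,e)$ survives in the translated configuration'' is not correct as stated; the sliding-plus-invariance argument gives the probability $\tau(b\sim e)\geq\tau(X)$ for each of the candidate edges, which is what the application actually needs.
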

\begin{proof} Consider the free group on letters $\{a,b\}$.
It is a well-known fact that the elements $\{a^i b a^{-i} \mid 1 \leq i \leq n \}$ are the basis of a free group of rank $n$. Thus any cycle formed by the partial self-maps $a^iba^{-i}|_{a^i(X)}$ and their inverses yields a cycle formed by the partial self-maps $a,a^{-1},b|_X, b^{-1}|_{b(X)}$. However, as $\tau$ does not contain cycles, there are no such cycles. Invariance is clear since $\Theta_n$ was $\Gamma$-equivariant.

It is clear that the width of $\tau_n$ is equal to $2n$, whereas the expected degree is equal to $2n \cdot \tau(X)$.
\end{proof}

We can now obtain a corollary which covers a particular case of the famous Dixmier problem on unitarizability, see the book of Pisier \cite{MR1818047} for a detailed discussion of this problem and further references. Our approach relies on results of Epstein-Monod \cite{MR2552305}, whose pioneering work related invariant random forests to this problem. 
Let us explain this in a bit more detail. Recall, a representation $\pi \colon \Gamma \to B(\mathcal H)^{\times}$ on a Hilbert space is called uniformly bounded if $\sup_{g \in \Gamma} \|\pi(g)\|< \infty$. It was shown by Dixmier that any uniformly bounded representation of an amenable group is conjugate to a unitary representation. He conjectured that this in fact yields a characterization of amenability. Groups with the property that every uniformly bounded representation on a Hilbert space is conjugate to a unitary representation (i.e.\ can be unitarized) are called {\it unitarizable}. It is well-known by now that non-abelian free groups (and all groups containing such groups) are not unitarizable. However, there are non-amenable groups without non-abelian free subgroups. After the results of Gaboriau-Lyons \cite{MR2534099}, there was some hope that Dixmier's longstanding problem could be resolved, using suitable invariant spanning forests that play the role of free subgroups, see \cite{MR2552305, ozawa}. We are now building on the work of Epstein-Monod \cite{MR2552305} and construct a specific sequence of invariant random forests showing that certain groups are not unitarizable.

\begin{cor}
Let $\Gamma$ be a finitely generated non-torsion group which does not have fixed price one. Then, the group $\Gamma$ is not unitarizable.
\end{cor}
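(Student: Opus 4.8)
The plan is to run the chain Theorem~\ref{relmin} $\to$ Lemma~\ref{comp} $\to$ the non-unitarizability criterion of Epstein--Monod \cite{MR2552305} (compare also \cite{ozawa}). Since $\Gamma$ is non-torsion, fix a non-torsion element $a \in \Gamma$; as unitarizability, cost, and fixed price are all properties of the abstract group, we may enlarge the given finite generating set so that $a \in S$, and write $G := {\rm Cay}(\Gamma,S)$ with $d := |S|$. Because $\Gamma$ does not have fixed price one, Remark~\ref{remabert} gives ${\rm cost}(\Gamma \curvearrowright [0,1]^E) > 1$, and Theorem~\ref{relmin} then supplies a random spanning sub-forest $\sigma$ of $G$ which is a factor of an i.i.d.\ process, a.s.\ contains $L(a)$, and satisfies ${\rm deg}(\sigma) \geq 2 \cdot {\rm cost}(\Gamma \curvearrowright [0,1]^E) > 2$.

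Set $\tau := \sigma$; this is exactly the hypothesis ``${\rm deg}(\tau) > 2$ and $L(a)$ a.s.\ contained in the sub-forest'' from the discussion preceding Lemma~\ref{comp}. To produce the generator $b$ needed there, observe that $L(a)$ is a disjoint union of bi-infinite $a$-lines, so its a.s.\ presence contributes exactly $2$ to ${\rm deg}(\tau)$; hence ${\rm deg}(\tau) > 2$ forces some $b \in S \setminus \{a,a^{-1}\}$ with $\tau(X) > 0$, where $X = \{x \in \{0,1\}^E \mid (b,e) \in G(x)\}$. (Such a $b$ automatically lies outside $\langle a \rangle$, since otherwise the edge $(b,e)$ together with $L(a)$ would already contain a cycle.) Lemma~\ref{comp} now yields, for every $n \in \N$, an invariant random spanning forest $\tau_n$ on $\Gamma$ with ${\rm width}(\tau_n) = 2n$ and ${\rm deg}(\tau_n) = 2n \cdot \tau(X)$; and since the maps $\Theta_n$ are equivariant and Borel and $\tau$ is a factor of an i.i.d.\ process, each $\tau_n$ is again a factor of an i.i.d.\ process.

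Finally, invoke the Epstein--Monod theorem \cite{MR2552305} (see also \cite{ozawa}): one associates to an invariant random spanning forest a uniformly bounded representation of $\Gamma$, and unitarizability of $\Gamma$ imposes a quantitative constraint on its invariant random spanning forests, namely that the expected degree can exceed $2$ only by an amount that is (roughly) at most logarithmic in the width. Since the forests $\tau_n$ have ${\rm deg}(\tau_n) = 2n \cdot \tau(X)$, which grows linearly in ${\rm width}(\tau_n) = 2n$ while $\tau(X) > 0$ is fixed, this constraint is violated for $n$ large, and therefore $\Gamma$ is not unitarizable.

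The only non-formal step, and the \emph{main obstacle}, is the last one: one must verify that the forests $\tau_n$ meet the precise hypotheses of \cite{MR2552305} (that they be factors of i.i.d.\ processes, and that their degree-to-width behaviour falls into the regime the Epstein--Monod argument requires) and quote the exact quantitative form of their non-unitarizability criterion. Everything preceding it is routine bookkeeping with the already-established Theorem~\ref{relmin} and Lemma~\ref{comp}.
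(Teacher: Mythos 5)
Your argument is correct and follows essentially the same route as the paper: Remark \ref{remabert} and Theorem \ref{relmin} produce $\tau$ with ${\rm deg}(\tau)>2$ and $L(a)$ contained a.s., Lemma \ref{comp} gives the forests $\tau_n$, and non-unitarizability comes from Epstein--Monod. The only imprecision is your recollection of their criterion as a logarithmic bound: the paper invokes \cite[Theorem 1.3]{MR2552305} via the quantity ${\rm deg}(\tau_n)^2/{\rm width}(\tau_n)=2n\cdot\tau(X)^2\to\infty$, which your $\tau_n$ satisfy since the degree grows linearly in the width, so the hedge you flag resolves exactly as you expect (and your justification that some $b\in S\setminus\{a,a^{-1}\}$ with $\tau(X)>0$ exists is a detail the paper leaves implicit).
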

\begin{proof} Let $a \in \Gamma$ be a non-torsion element and let $S \subset \Gamma$ be a finite generating set with $a \in S$.
Consider the Cayley graph $G$ associated with $S$ as above. By Remark \ref{remabert}, if $\Gamma$ does not have fixed price one, then ${\rm cost}(\Gamma \curvearrowright [0,1]^E)>1$. By  Theorem \ref{relmin}, there exists an invariant random spanning forest $\tau$ with ${\rm deg}(\tau)>2$ and $L(a)$ contained in it almost surely. 

Thus, we can follow the construction of $(\tau_n)_n$ as above.
In particular, we conclude from Lemma \ref{comp} that
\begin{equation} \label{unb} \frac{{\rm deg}(\tau_n)^2}{{\rm width}(\tau_n)} = 2n \cdot \tau(X)^2\to \infty, \quad \mbox{as}\quad  n \to \infty.\end{equation}
Now, the desired result is an immediate consequence of \eqref{unb} and \cite[Theorem 1.3]{MR2552305} from the work of Epstein-Monod.
\end{proof}

\begin{rem}
The preceding result can also be proved along the same lines under the assumption that $\Gamma$ is finitely generated, does not have fixed price one, and that there is no bound on the order of finite subgroups of $\Gamma$ or that $\Gamma$ contains an infinite amenable subgroup. This covers various finitely generated simple torsion groups with positive first $\ell^2$-Betti number, see \cite{thomosin} for examples.
\end{rem}

\section*{Acknowledgments}

I want to thank Russ Lyons for interesting comments on Section \ref{FMSF}, pointing out \cite[Exercise 11.16]{lyons} in his book, and explaining some parts of \cite{MR2534099}. I am grateful to Art\"em Sapozhnikov and Nicolas Monod for interesting and motivating discussions. Most of the results presented in this note have been obtained during the Hausdorff Trimester Program on {\it Rigidity} in 2009. The author is grateful to this institution for its hospitality and to Wolfgang L\"uck and Nicolas Monod for organizing this event. This research was supported by ERC-Grant No.\ 277728 "Geometry and Analysis of Group Rings".

%
%

\end{document}